\newtheorem{theorem}{Theorem}
\newtheorem{lemma}{Lemma}
\newtheorem{proposition}{Proposition}
\begin{document}

\title[The complex Green operator: compactness]{The complex Green operator on CR-submanifolds of $\mathbb{C}^{n}$ of hypersurface type: compactness}

\author{Emil J. Straube}
\address{Department of Mathematics\\
Texas A\&M University\\
College Station, Texas, 77843}
\email{straube@math.tamu.edu}

\thanks{2000 \emph{Mathematics Subject Classification}: 32W10, 32V99}
\keywords{Complex Green operator, $\overline{\partial}_{b}$, compactness, property($P$), CR-submanifold of hypersurface type, pseudoconvex CR-submanifold}
\thanks{Research supported in part by NSF grant DMS 0758534}

\date{July 5, 2010; revised August 7, 2010.}

\begin{abstract}
We establish compactness estimates for $\overline{\partial}_{b}$ on a compact pseudoconvex CR-submanifold of $\mathbb{C}^{n}$ of hypersurface type that satisfies property(P). 
When the submanifold is orientable, these estimates were proved by A.~Raich in \cite{Raich10} using microlocal methods. Our proof deduces the estimates from (a slight extension, when $q>1$, of) those known on hypersurfaces via the fact that locally, CR-submanifolds of hypersurface type are CR-equivalent to a hypersurface. The relationship between two potential theoretic conditions is also clarified.
\end{abstract}

\maketitle

\section{Introduction}\label{intro}
A CR-submanifold $M$ of $\mathbb{C}^{n}$ is of hypersurface type if the complex tangent bundle has real codimension $1$ inside the real tangent bundle of $M$. For background on CR-manifolds, we refer the reader to the books \cite{BER, Boggess91, ChenShaw01, Zampieri08}. The $\overline{\partial}$-complex of the ambient $\mathbb{C}^{n}$ induces the (extrinsic) $\overline{\partial}_{b}$-complex on $M$ (\cite{Boggess91}, chapter 8, \cite{ChenShaw01}, chapter 7). In this paper, we will study the associated complex Green operator on a compact pseudoconvex CR-submanifold of $\mathbb{C}^{n}$ of hypersurface type. These submanifolds are (possibly non-orientable) generalizations, to codimension greater than one, of boundaries of bounded pseudoconvex domains.\footnote{When orientable, they are boundaries of complex analytic varieties, but in general only in the sense of currents (\cite{HarveyLawson75}). This is not strong enough to apply known methods as in \cite{Kohn86, RaichStraube08}.} 

That $\overline{\partial}_{b}$, hence $\overline{\partial}_{b}^{*}$ and $\Box_{b}$, have closed range in $\mathcal{L}^{2}(M)$ when $M$ is the boundary of a smooth bounded pseudoconvex domain in $\mathbb{C}^{n}$, was shown in \cite{Shaw85b, BoasShaw86, Kohn86}. Compactness estimates for these operators when the boundary of the domain satisfies a potential theoretic condition known as property($P$) were established only recently in \cite{RaichStraube08} (the results in \cite{RaichStraube08}, and ours, are actually more precise, see section \ref{comp} below; see also \cite{Kohn02}). For general $M$, the closed range properties are also surprisingly recent: they were established in \cite{Nicoara06} for $M$ with $\dim(M) \geq 5$ (compare also \cite{HarRaich10}); the case $\dim(M)=3$ is open. Subsequently, Raich further developed the microlocal methods of \cite{Nicoara06} to derive compactness estimates when $M$ satisfies a CR-analogue of property(P) and is also orientable (\cite{Raich10}). Because compactness is a local property, one would not expect orientability to be crucial. In this paper we give a new proof of the compactness results in \cite{Raich10} that indeed does not require orientability. More specifically, because locally, $M$ is CR-equivalent to a hypersurface, these results should follow directly from the corresponding results on hypersurfaces in our joint work in \cite{RaichStraube08}. \emph{Prima facie} this seems straightforward (again because compactness is a local property). However, two issues arise immediately. First, $\overline{\partial}_{b}$ does not commute with pullbacks under CR-mappings. This turns out to be easy to handle (in the context of compactness).\footnote{Using a metric free version of the $\overline{\partial}_{b}$-complex (see e.g. \cite{Boggess91}) would also remedy this situation. But the results in \cite{RaichStraube08} are for the extrinsic complex, so an argument has to be made somewhere. Here, the point is simply that the behavior of the extrinsically defined $\overline{\partial}_{b}$ under pullbacks notwithstanding, compactness is preserved under CR-equivalences, see Remark 6 at the end of section \ref{proof}.} Second, and more importantly, the behavior under biholomorphisms and under CR-equivalences of the version of property($P$) appropriate for $(0,q)$-forms is not understood when $q>1$. This is of independent interest also for $\overline{\partial}$, because compactness of $\overline{\partial}$ is known to be invariant. The same remark applies to the behavior under a change in the metric (see \cite{CelikStraube08} for the invariance of compactness). We sidestep the question of invariance by showing that `property($P$) with respect to a general metric' still implies the desired compactness estimates.

We also clarify the relationship between seemingly different versions of `property(P)-type' conditions. In particular, we show that for submanifolds of $\mathbb{C}^{n}$ as above, property(P) and its CR-analogue from \cite{Raich10} are actually equivalent.

Compactness estimates for $\overline{\partial}_{b}$ imply that the complex Green operator is continuous in Sobolev norms (see the discussion at the end of section \ref{comp}). But it is also of interest to study estimates in Sobolev norms in situations where compactness may not hold. When $M$ is the boundary of a smooth bounded pseudoconvex domain in $\mathbb{C}^{n}$, such estimates were established in \cite{BoasStraube91} when the domain admits a defining function that is plurisubharmonic at points of the boundary (i.e. $M$). This class of domains includes the convex domains. We plan to address the question of Sobolev estimates in the case of submanifolds of hypersurface type in a future paper.

When $M$ is not assumed of hypersurface type, it is not known whether the results for $\overline{\partial}_{b}$ discussed above hold; it is not obvious that the methods of \cite{Nicoara06, RaichStraube08, Raich10} carry over. What the methods of the present paper show is that, at least for the implication from property(P) to compactness, one can restrict attention to generic submanifolds (if $M$ is of hypersurface type, this means to hypersurfaces). 

Finally, when embedability is dropped from the assumptions, $\overline{\partial}_{b}$ need not have closed range in $\mathcal{L}^{2}$; indeed, the closed range property is quite intimately related to embedability  (see for example \cite{Kohn90}, \cite{Boggess91}, chapters 11 and 12, and \cite{ChenShaw01}, chapter 12, for a discussion of these issues). Whether a compactness estimate (which implies closed range, see the discussion in the next section) has any implications for embedability seems to be unknown.\footnote{However, compactness should yield global regularity properties also in this situation. These properties are favorable for embedability, in view of Burns' modification \cite{Burns79} of the Boutet de Monvel embedding theorem \cite{BdM75}; see Theorem 12.2.2 in \cite{ChenShaw01}. What is not clear is whether points can still be separated when strict pseudoconvexity is replaced by compactness estimates for $\overline{\partial}_{b}$.} Positive results in this direction would certainly provide ample motivation for studying sufficient conditions for compactness also on abstract CR-manifolds.

\section{CR submanifolds and $\overline{\partial}_{b}$}\label{CR}
Let $M \subset \mathbb{C}^{n}$ be a compact smooth CR-submanifold. For $z \in M$, the complex tangent space $T^{\mathbb{C}}_{z}(M)$ is defined as $T_{z}(M) \cap JT_{z}(M)$, where $J$ is multiplication by $i$. The spaces $T^{\mathbb{C}}_{z}(M)$ form the complex tangent bundle $T^{\mathbb{C}}(M)$. $T^{1,0}(M)$ denotes the bundle of complex vector fields of type $(1,0)$ tangent to $M$. $T^{1,0}_{z}(M)$ and $T^{\mathbb{C}}_{z}(M)$ are $\mathbb{C}$-linearly isomorphic via $\sum_{j=1}^{n}a_{j}\partial/\partial z_{j} \leftrightarrow (a_{1}, \cdots, a_{n})$. $M$ is said to be of hypersurface type, if at each point $z \in M$, $T^{\mathbb{C}}_{z}(M)$ has (real) codimension one inside $T_{z}(M)$. That is, if $m-1$ is the complex dimension of $T^{\mathbb{C}}_{z}(M)$ for $z \in M$ (the CR-dimension of $M$), then the real dimension of $M$ is $2m-1$. (The notation $m-1$ is chosen here in analogy to a hypersurface in $\mathbb{C}^{m}$, and will be convenient later.) Locally, near a point $z \in M$, chose a real vector field $T \in T(M)$ that is transversal to $T^{\mathbb{C}}(M)$ at each point. 'The' Levi form at $z$ is the Hermitian form $L_{z}$ on $T^{1,0}_{z}(M)$ given by
\begin{equation}\label{levi}
\frac{1}{2i}[X,\overline{Y}] = L_{z}(X,\overline{Y})\,T \mod \;\;T^{1,0}_{z}(M) \oplus T^{0,1}_{z}(M) ,\;\;\;\; X,Y \in T^{1,0}_{z}(M) \; .
\end{equation}
$M$ is pseudoconvex if every point has a neighborhood where the Levi form is (positive or negative) semidefinite (the latter property is independent of the choice of $T$). In the case of an embedded hypersurface, this definition agrees with the familiar one. Note that we are not assuming $M$ to be orientable, so such $T$'s need not be defined globally. 

$\Lambda^{0,q}(M)$ denotes as usual the bundle of $(0,q)$-forms on $M$, that is, the bundle obtained from $\Lambda^{0,q}(\mathbb{C}^{n})$ by first restricting (the bundle) to $M$ and then quotenting out, at each point, the (restriction to the point of) the ideal generated by a set of local defining functions and their $\overline{\partial}$'s (see \cite{Boggess91}, chapter 8, \cite{ChenShaw01}, chapter 7). Alternatively, this is the bundle of skew symmetric multilinear maps on $(T^{0,1}(M))^{q}$ to $\mathbb{C}$. Locally, these forms can be described as follows. Let $L_{1}, \cdots, L_{m-1}$ be an orthonormal basis (locally) of $T^{1,0}(M)$, where the inner product on vectors is the one inherited from the ambient $\mathbb{C}^{n}$. Denote by $\omega_{1}, \cdots, \omega_{m-1}$ the dual basis in $(T^{1,0}(M))^{*} = \Lambda^{1,0}(M)$. Then a $(0,q)$-form $u$ can be written uniquely as $u = \sum^{\prime}_{|J|=q}u_{J}\overline{\omega}^{J}$, where $\prime$ indicates summation over strictly increasing multi-indices $J=(j_{1}, \cdots, j_{q}) \in \{1, \cdots, m-1\}^{q}$ only, and $\overline{\omega}^{J} = \overline{\omega_{j_{1}}}\wedge \cdots \wedge \overline{\omega_{j_{q}}}$. We still may conveniently take the coefficients $u_{J}$ to be defined for all $J$ by skew symmetry.

The $\overline{\partial}_{b}$-complex on $M$ is induced by the $\overline{\partial}$-complex on $\mathbb{C}^{n}$: if $u$ is a (smooth) section of $\Lambda^{0,q}(M)$, choose a section $\tilde{u}$ of $\Lambda^{0,q}(\mathbb{C}^{n})$ whose tangential part agrees with $u$ at points of $M$. Then $\overline{\partial}_{b}u$ equals the tangential part of $\overline{\partial}\tilde{u}$. This definition is independent of the choice of the extension $\tilde{u}$ (see again \cite{Boggess91, ChenShaw01} for details). In local coordinates as above, this gives that for a function $f$, $\overline{\partial}_{b}f = \sum_{j=1}^{m-1}\overline{L_{j}}(f)\overline{\omega_{j}}$. For a $(0,q)$-from $u = \sum^{\prime}_{|J|=q}u_{J}\overline{\omega}^{J}$ with $q >0$, one has
\begin{equation}\label{d-bar-b}
\overline{\partial}_{b}u = \sideset{}{'}\sum_{|J|=q} \sum_{j=1}^{m-1}\overline{L_{j}}(u_{J})\overline{\omega_{j}}\wedge\overline{\omega}^{J} + \sideset{}{'}\sum_{|J|=q}u_{J}\,\overline{\partial}_{b}\overline{\omega}^{J} \; .
\end{equation}
Note that in the second sum on the right hand side of \eqref{d-bar-b}, the coefficients $u_{J}$ are not differentiated.

The inner product in $\mathbb{C}^{n}$ induces a pointwise inner product in $T^{1,0}(M)$ and therefore in $\Lambda^{0,1}(M)$ (by duality). Declaring $\overline{\omega}^{J}$ and $\overline{\omega}^{I}$ orthonormal when $J \neq I$ (as sets) results in a pointwise inner product on $\Lambda^{0,q}(M)$. (Note that this inner product is well defined: it is independent of the choice of local orthonormal basis $L_{1}, \cdots, L_{m-1}$ as above.) This inner product, denoted by $(\;,\;)_{z}$, then induces an $\mathcal{L}^{2}$ inner product via integration against Lebesgue measure $d\mu_{M}$ on $M$ (induced from the ambient $\mathbb{C}^{n}$):
\begin{equation}\label{L2}
(u,v)_{M} = \int_{M} (u,v)_{z}d\mu_{M}(z) \; .
\end{equation}
We denote by $\mathcal{L}^{2}_{(0,q)}(M)$ the Hilbert space of $\mathcal{L}^{2}$-sections of $\Lambda^{0,q}(M)$, provided with the inner product \eqref{L2}. Locally, elements of $\mathcal{L}^{2}_{(0,q)}(M)$ are still written as above, but the coefficients $u_{J}$ are now in $\mathcal{L}^{2}$ (locally). $\overline{\partial}_{b}$ is defined from $\mathcal{L}^{2}_{(0,q)}(M)$ to $\mathcal{L}^{2}_{(0,q+1)}(M)$ as a closed, densely defined operator in the usual manner: a form $u$ is in the domain of $\overline{\partial}_{b}$ if its coefficients, computed in local coordinates in the sense of distributions via \eqref{d-bar-b}, are square integrable. The resulting complex is the $\mathcal{L}^{2}$-$\overline{\partial}_{b}$-complex. 

As a closed, densely defined operator, $\overline{\partial}_{b}$ has an adjoint $\overline{\partial}_{b}^{*}:\mathcal{L}^{2}_{(0,q+1)}(M) \rightarrow \mathcal{L}^{2}_{(0,q)}(M)$. Integration by parts gives that in local coordinates, if u = $\sum_{|J|=q}^{\prime}u_{J}\overline{\omega}^{J}$, then
\begin{equation}\label{d-bar-b-*}
\overline{\partial}_{b}^{*}u = - \sum_{j=1}^{m-1}\sideset{}{'}\sum_{|K|=q-1}L_{j}u_{jK}\overline{\omega}^{K} + \text{terms of order zero}  \end{equation}
(compare \cite{FollandKohn72}, p.94, \cite{ChenShaw01}, section 8.3). Finally, $\Box_{b,q}$ is defined by
$\Box_{b,q}u = \overline{\partial}_{b}\overline{\partial}_{b}^{*}u + \overline{\partial}_{b}^{*}\overline{\partial}_{b}u$, with domain coinsisting of those forms in $\mathcal{L}^{2}_{(0,q)}(M)$ where the compositions are defined. $\Box_{b,q}$ is selfadjoint; in fact, it is the unique selfadjoint operator associated with the closed quadratic form (see \cite{Davies95}, Theorem 4.4.2, \cite{ReedSimon80}, Theorem VIII.15) $Q_{b,q}(u,u)=(\overline{\partial}_{b}u,\overline{\partial}_{b}u)_{M} + (\overline{\partial}_{b}^{*}u,\overline{\partial}_{b}^{*}u)_{M}$, with form domain equal to $dom(\overline{\partial}_{b}) \cap dom(\overline{\partial}_{b}^{*})$. The proof of this equality is essentially the same as that of Proposition 2.8 in \cite{Straube10}, where the corresponding fact in the context of the $\overline{\partial}$-complex is proved.

\section{Compactness of the complex Green operator}\label{comp}
Recall that a compact set $K \subset \mathbb{C}^{n}$ is said to satisfy property($P_{q}$) if the following holds: for every $A>0$ there is a $C^{2}$ function $\lambda_{A}$ defined in some neighborhood $U_{A}$ of $K$ such that 
\begin{equation}\label{Pq1}
0 \leq \lambda_{A}(z) \leq 1 \; , \; z \in U_{A} \; ,
\end{equation}
and
\begin{equation}\label{Pq2}
\sideset{}{'}\sum_{|K|=q-1}\sum_{j,k=1}^{n}\frac{\partial^{2}\lambda_{A}(z)}{\partial z_{j}\partial\overline{z_{k}}}w_{jK}\overline{w_{kK}} \geq A|w|^{2} \; , \; z \in U_{A} \;, \; w \in \Lambda_{z}^{0,q} \; ,
\end{equation}
where $\Lambda_{z}^{0,q}$ denotes the space of $(0,q)$-forms at $z$. For $q=1$, and $K$ the boundary of a pseudoconvex domain, this property was introduced in \cite{Catlin84b}; Catlin showed that when the boundary is sufficiently smooth (this restriction was removed in \cite{Straube97}) and satisfies the condition, then the $\overline{\partial}$-Neumann operator on $(0,1)$-forms on the domain is compact. In the above form, still for $q=1$, the condition was studied in detail (under the name $B$-regularity) in \cite{Sibony87b}. For $q>1$, the property appears in \cite{FuStraube98, FuStraube99}. Sibony's analysis carries over to the $q>1$ case in a straightforward manner. Details, including the relevance of property($P_{q}$) for compactness of the $\overline{\partial}$-Neumann operator on $(0,q)$-forms, may be found in \cite{Straube10}, chapter 4. In particular, Lemma 4.7 there gives two useful equivalent formulations of \eqref{Pq2}. First, the sum of any $q$ (equivalently, the smallest $q$) eigenvalues of the complex Hessian of $\lambda_{A}$ at $z$ is at least $A$. It is immediate from this formulation that property($P_{q}$) implies property($P_{q+1}$). Second,
\begin{equation}\label{Pq2a}
\sum_{s=1}^{q}\sum_{j,k=1}^{n}\frac{\partial^{2}\lambda_{A}(z)}{\partial z_{j}\partial\overline{z_{k}}}(\underline{t}^{s})_{j}\overline{(\underline{t}^{s})_{k}} \,\geq \, A \; ,
\end{equation}
whenever $\underline{t}^{1}, \cdots, \underline{t}^{q}$ are orthonormal in $\mathbb{C}^{n}$.

We can now formulate the main result of this paper, which says that property($P_{q}$) is also sufficient for compactness in the $\overline{\partial}_{b}$-complex at the level of $(0,q)$-forms for compact pseudoconvex CR-submanifolds of hypersurface type, modulo a modification that is necessary. Namely, the property has to be assumed at the symmetric form levels $q$ and $m-1-q$, where where $m-1$ is the CR-dimension of $M$.
\begin{theorem}\label{main}
Let $M \subset \mathbb{C}^{n}$ be a smooth compact pseudoconvex CR-submanifold of hypersurface type. Assume that $M$ satisfies property($P_{q}$) and property($P_{m-1-q}$) (equivalently: $(P_{min\{q,m-1-q\}})$), where $1 \leq q \leq m-2$. Then the following compactness estimate holds: for all $\varepsilon > 0$, there exists a constant $C_{\varepsilon}$, such that
\begin{equation}\label{compest}
\|u\|_{\mathcal{L}^{2}_{(0,q)}(M)} \leq \varepsilon \left (\|\overline{\partial}_{b}u\|_{\mathcal{L}^{2}_{(0,q+1)}(M)}  + \|\overline{\partial}_{b}^{*}u\|_{\mathcal{L}^{2}_{(0,q-1)}(M)} \right ) + C_{\varepsilon}\|u\|_{W^{-1}_{(0,q)}(M)} \; ,
\end{equation}
for all $u \in dom(\overline{\partial}_{b}) \cap dom(\overline{\partial}_{b}^{*})$. (By symmetry, the same family of estimates holds for $q$ replaced by $m-1-q$.) 
\end{theorem}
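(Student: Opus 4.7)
The approach, as outlined in the introduction, is to reduce the theorem to the hypersurface version proved in \cite{RaichStraube08} via the fact that locally, a CR-submanifold of hypersurface type is CR-equivalent to an open piece of a smooth pseudoconvex hypersurface in $\mathbb{C}^{m}$. First, \eqref{compest} can be localized: if each point of $M$ has a neighborhood on which the estimate holds for forms supported there, the global estimate follows by cutting off with a partition of unity, because the commutators $[\overline{\partial}_{b}, \chi]$ and $[\overline{\partial}_{b}^{*}, \chi]$ are zeroth-order and thus the extra $\mathcal{L}^{2}$-terms they produce can be absorbed using the compact embedding $\mathcal{L}^{2} \hookrightarrow W^{-1}$. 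We may therefore assume $u$ is supported in a small $U \subset M$ on which there is a smooth CR-diffeomorphism $\Phi: U \to \tilde{U}$ onto an open piece of a smooth pseudoconvex hypersurface $\tilde{M} \subset \mathbb{C}^{m}$; pseudoconvexity of $\tilde{M}$ transfers since the semidefiniteness of the Levi form is a CR-invariant.

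Set $\tilde{u} = \Phi_{*} u$. Two mismatches arise. First, the extrinsic operators $\overline{\partial}_{b}^{M}$ and $\overline{\partial}_{b}^{\tilde M}$ do not quite intertwine with $\Phi_{*}$, their failure being a zeroth-order operator; second, the inner product on $\Lambda^{0,q}(\tilde U)$ inherited from $(U,M)$ by pushforward generally differs from the one $\tilde M$ carries from the standard metric on $\mathbb{C}^{m}$. The first mismatch is harmless for the same reason the localization commutators were. The second forces us to prove a version of the compactness estimate of \cite{RaichStraube08} for a hypersurface equipped with an arbitrary smooth Hermitian metric on its $(0,q)$-forms; this is the main step.

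The generalized hypersurface estimate is the real work and the main obstacle. We do \emph{not} attempt to transfer the $P_{q}$-bumps $\lambda_{A}$ from a neighborhood of $M$ in $\mathbb{C}^{n}$ to a neighborhood of $\tilde{M}$ in $\mathbb{C}^{m}$; such a transfer would amount to a CR-invariance of property($P_{q}$) that is unknown for $q > 1$. Instead, we revisit the proof in \cite{RaichStraube08} and check that it still goes through when the metric on $(0,q)$-forms is arbitrary and the $\lambda_{A}$ are taken on the ``original'' ambient side of the CR-equivalence. The pieces of information that proof extracts from $\lambda_{A}$ are its complex Hessian paired with $T^{1,0}$-vectors and its coupling with the metric on forms; both transfer correctly between $M$ and $\tilde{M}$ under $\Phi$, independently of the ambient embedding. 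Once this extension is in place, applying it to $\tilde{u}$, pulling back via $\Phi^{-1}$, and summing over the partition of unity yields \eqref{compest}; the symmetric estimate at level $m-1-q$ is immediate from the $q \leftrightarrow m-1-q$ symmetry of the hypotheses.
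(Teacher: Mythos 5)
Your overall architecture matches the paper's: localize by a partition of unity, use the local CR-equivalence of $M$ with a hypersurface in $\mathbb{C}^{m}$ (the graph projection $\pi$), dismiss the failure of $\overline{\partial}_{b}$ to commute with the pullback as a zeroth-order error, and sidestep the unknown invariance of property($P_{q}$) for $q>1$ by proving a metric-generalized version of the hypersurface compactness estimate of \cite{RaichStraube08}. That last idea is exactly the paper's key move. But you have left the entire ``main step'' as an assertion (``we revisit the proof \ldots and check that it still goes through''), and two ingredients that are genuinely needed to make it work are missing. First, the result you want to quote is for the boundary of a bounded pseudoconvex domain, not for an open piece of hypersurface carrying a compactly supported form. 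You must embed a neighborhood of $\pi(P)$ in $\pi(M)$ into the boundary of a smooth bounded pseudoconvex $\Omega\subset\mathbb{C}^{m}$ whose remaining boundary is strictly pseudoconvex (so that subelliptic $1/2$-estimates handle the part of $b\Omega$ away from $\pi(M)$); this construction in turn requires knowing $\pi(M)$ is not Levi-flat, which the paper extracts from property($P_{m-1}$) via the absence of $(m-1)$-dimensional varieties. Your proposal never leaves the open piece, so there is nothing to which the (generalized) theorem of \cite{RaichStraube08} applies.

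Second, saying that the Hessian data of $\lambda_{A}$ ``transfers correctly under $\Phi$'' does not produce the weight functions the Kohn--Morrey--H\"ormander argument needs: those must be defined on the \emph{interiors} of $\Omega$ and of the annulus $\mathbb{B}\setminus\overline{\Omega}$, with a Hessian bound in the full frame $Z_{1},\dots,Z_{m}$ of the modified metric, not merely on $\pi(M)$ in complex tangential directions. The paper builds them by extending the CR graphing functions holomorphically to the pseudoconvex side (Tr\'epreau), pulling $\lambda_{A}$ back by the resulting graphing map of the complex manifold $\widetilde{M}$, and then verifying (\ref{hessiancomp}) using that the extended frame $L_{1},\dots,L_{m}$ of $T^{1,0}(\widetilde{M})$ is orthonormal in $\mathbb{C}^{n}$; this computation is the heart of the proof and is absent from your write-up. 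Relatedly, you should make explicit where property($P_{m-1-q}$) is consumed: it is not only for the parenthetical symmetric statement, but is needed to get compactness on the pseudoconcave side $\mathbb{B}\setminus\overline{\Omega}$ (equivalently, at the symmetric form level in the interior), without which the boundary estimate at level $q$ itself does not follow.
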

Note that the condition $1 \leq q \leq m-2$ implies $m \geq 3$, so that the (real) dimension of $M$ is at least five. Formally, the restriction (which is also present in \cite{RaichStraube08, Raich10}) arises because if $q=0$ or $q=m-1$, the assumption would involve property($P_{0}$), and it is not clear what an appropriate interpretation of $P_{0}$ is. When $m$ is at least three, one can obtain a 'substitute' to the effect that ($P_{1}$) is a sufficient condition for compactness of the complex Green operator on $(0,0)$-forms (functions) and on $(0,m-1)$-forms. Details are in \cite{RaichStraube08}, last paragraph of section 1, and \cite{Raich10}, discussion following the statement of Theorem 1.1. The case $m=2$, i.e. $dim(M)=3$, is not understood.

That the assumptions in Theorem \ref{main} have to be made at symmetric form levels is essentially dictated by Lemma \ref{symm} below, and corresponds to the symmetry between $q$ and $(n-1-q)$-forms in the case of the boundary of a domain treated in \cite{RaichStraube08}. In turn, the phenomenon that a condition at the level of $q$-forms in the interior of a domain needs to be imposed at symmetric levels on the boundary to obtain the analogous estimates goes back at least to the conditions $Z(q)$ and $Y(q)$ (which equals $Z(q)$ \emph{and} $Z(n-1-q)$), the conditions for subelliptic $1/2$-estimates in the interior and on the boundary, respectively (compare for example \cite{FollandKohn72}, p.57,94, and \cite{ChenShaw01}, p.192--193). 
\begin{lemma}\label{symm}
Let $M \subset \mathbb{C}^{n}$ be a smooth compact pseudoconvex CR-submanifold of hypersurface type, $1 \leq q \leq m-1$, where $(m-1)$ is the CR-dimension of $M$. A compactness estimate \eqref{compest} holds for $(0,q)$-forms if and only if it holds for $(0,m-1-q)$-forms.
\end{lemma}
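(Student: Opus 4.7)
The plan is to transfer the compactness estimate via a pointwise, $\mathbb{C}$-antilinear isometry $\star : \Lambda^{0,q}(M) \to \Lambda^{0,m-1-q}(M)$ that intertwines $\overline{\partial}_{b}$ with $\overline{\partial}_{b}^{*}$ modulo zeroth-order operators, along the lines of the Hodge-type operator implicit in the hypersurface case of \cite{RaichStraube08}.

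Locally, in a chart carrying an orthonormal coframe $\omega_{1},\ldots,\omega_{m-1}$ for $\Lambda^{1,0}(M)$, I define $\star(f\,\overline{\omega}^{J}) = \overline{f}\,\epsilon(J)\,\overline{\omega}^{J^{c}}$, where $J^{c}$ is the increasing complement of $J$ in $\{1,\ldots,m-1\}$ and $\epsilon(J) = \pm 1$ is the sign of the permutation $(J,J^{c}) \mapsto (1,\ldots,m-1)$. Equivalently, $\star v$ is characterized by $u \wedge \star v = (u,v)_{z}\, \overline{\omega}_{1}\wedge\cdots\wedge\overline{\omega}_{m-1}$ for all $u \in \Lambda^{0,q}_{z}$. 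Then $\star$ is a pointwise $\mathcal{L}^{2}$-isometry, and a direct calculation starting from \eqref{d-bar-b} and \eqref{d-bar-b-*} gives
\[
\overline{\partial}_{b}^{*}\,\star u \;=\; \pm\, \star\, \overline{\partial}_{b} u \,+\, Zu,
\]
with $Z$ a smooth zeroth-order (algebraic) operator, together with the symmetric identity obtained by interchanging $\overline{\partial}_{b}$ and $\overline{\partial}_{b}^{*}$. In particular $\star$ maps $\mathrm{dom}(\overline{\partial}_{b}) \cap \mathrm{dom}(\overline{\partial}_{b}^{*})$ at one form level onto the corresponding domain at the complementary level.

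Assuming \eqref{compest} at level $q$, I consider a $(0,m-1-q)$-form $v$ supported in such a chart and apply the hypothesized estimate to $\star v$. Using the isometry property, the intertwining identities, and the fact that $\star$ is bounded on $W^{-1}$ (it is of order zero), this yields
\[
\|v\|_{\mathcal{L}^{2}_{(0,m-1-q)}(M)} \;\leq\; \varepsilon\bigl(\|\overline{\partial}_{b} v\|_{\mathcal{L}^{2}} + \|\overline{\partial}_{b}^{*} v\|_{\mathcal{L}^{2}}\bigr) + C\varepsilon\,\|v\|_{\mathcal{L}^{2}} + C_{\varepsilon}'\,\|v\|_{W^{-1}_{(0,m-1-q)}(M)} .
\]
Shrinking $\varepsilon$ absorbs the middle term into the left side and produces \eqref{compest} at level $m-1-q$ for $v$.

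For a general $v$ (and to accommodate possible non-orientability of $M$, which is what prevents a global construction of $\star$), I cover $M$ by charts admitting orthonormal coframes, take a subordinate partition of unity $\{\phi_{j}\}$, and write $v = \sum_{j}\phi_{j}v$. The local result applies to each $\phi_{j}v$; the commutators $[\overline{\partial}_{b},\phi_{j}]$ and $[\overline{\partial}_{b}^{*},\phi_{j}]$ are zeroth-order and contribute at most $C\|v\|_{\mathcal{L}^{2}}$, once more absorbed for $\varepsilon$ small. The reverse implication is obtained by interchanging $q$ and $m-1-q$. The main technical nuisance is the bookkeeping with the frame-dependent signs and the zeroth-order terms in \eqref{d-bar-b-*}, but none of these affects the structural form of \eqref{compest}; once the local intertwining is verified, the passage to the global, possibly non-orientable setting is routine.
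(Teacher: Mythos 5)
Your argument is correct and is essentially the paper's own proof: the paper cites Koenig's Corollary 6.7, whose proof rests on exactly the Hodge-$\star$ type (antilinear, frame-dependent) isometry intertwining $\overline{\partial}_{b}$ and $\overline{\partial}_{b}^{*}$ modulo zeroth-order terms that you construct explicitly, with the order-zero terms absorbed just as you do. Your additional care with the partition of unity to handle non-orientability is consistent with the paper's remark that the construction only needs to be local on the boundaryless manifold $M$.
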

\begin{proof} For the case of subelliptic estimates, Lemma \ref{symm} is contained in \cite{Koenig04}, Corollary 6.7 and its proof. The proof there relies on a Hodge-$*$ type construction that results in isomorphisms between $\mathcal{L}^{2}_{(0,q)}(M)$ and $\mathcal{L}^{2}_{(0,m-1-q)}(M)$ which intertwine $\overline{\partial}_{b}$ and $\overline{\partial}_{b}^{*}$, modulo terms of order zero (compare also \cite{Kohn81}, page 255; an alternative construction based on a natural sesquilinear pairing is in the appendix of \cite{RaichStraube08}). These terms can be absorbed. Koenig's proof works verbatim for Lemma \ref{symm}.
\end{proof}

\emph{Remark 1}: The reason why the constructions in the proof of Lemma \ref{symm} work is that $M$ is a manifold without boundary, so that being in the domain of $\overline{\partial}_{b}^{*}$ does not involve a boundary condition. The analogous constructions on a domain fail because they result in forms which need not be in the domain of $\overline{\partial}^{*}$ (and the symmetry between form levels with respect to subellipticity and compactness is indeed absent in general on a domain in $\mathbb{C}^{n}$). In addition to the references already given, the reader should also consult \cite{Kohn02}, \cite{HarRaich10}, and \cite{Khanh10} in this connection.

\vspace{0.1in}
\emph{Remark 2}: There is a version of property($P_{q}$) introduced in \cite{McNeal02}, called property($\widetilde{P}_{q}$) there and shown to imply compactness of $N_{q}$. ($P_{q}$) implies ($\widetilde{P}_{q}$), but to what extent the latter is actually more general is not understood at present. For this reason, and because the proof of Theorem \ref{main} requires some of the analysis of property($P_{q}$) from \cite{Sibony87b, FuStraube99, Straube10} that is not (yet) available for ($\widetilde{P}_{q}$), we have chosen to work with ($P_{q}$). 

There are also geometric sufficient conditions for compactness of the $\overline{\partial}$-Neumann operator in terms of short time flows generated by suitable complex tangential vector fields. These ideas, whose relationship to $P$/$\widetilde{P}$ is not understood in general, were introduced for the case of $\mathbb{C}^{2}$ in \cite{Straube04} and generalized for $\mathbb{C}^{n}$ in \cite{MunasingheStraube06} (see also \cite{Straube10}, section 4.11). The geometric arguments from these papers are expected to apply in the context of the present paper as well, but we do not pursue this direction here.

\vspace{0.15in}
Theorem \ref{main} is proved in section \ref{proof}. We conclude this section by noting that the compactness estimate \eqref{compest} implies first the 'usual' $\mathcal{L}^{2}$-theory, and then the 'customary' Sobolev estimates. In particular, $H_{q}:=ker(\Box_{b,q})$ equals $ker(\overline{\partial}_{b}) \cap ker(\overline{\partial}_{b}^{*})$ and is finite dimensional, both $\overline{\partial}_{b}$ and $\overline{\partial}_{b}^{*}$ have closed range, and we have the estimate 
\begin{multline}\label{L2-estimate}
\|u\|_{\mathcal{L}^{2}_{(0,q)}(M)} \leq C \left (\|\overline{\partial}_{b}u\|_{\mathcal{L}^{2}_{(0,q+1)}(M)}  + \|\overline{\partial}_{b}^{*}u\|_{\mathcal{L}^{2}_{(0,q-1)}(M)} \right )\;, \\ 
\; u \in dom(\overline{\partial}_{b}) \cap dom(\overline{\partial}_{b}^{*}) \;, \; u \perp H_{q} \; .
\end{multline}
(see for example \cite{Hormander65}, Theorems 1.1.3 and 1.1.2). It follows further that $\Box_{b,q}$ maps the orthogonal complement of $H_{q}$ onto itself, and  has a bounded inverse there, the complex Green operator $G_{q}$ (compare for example the proof of Theorem 9.4.2 in \cite{ChenShaw01}). We take $G_{q}$ to be defined on all of $\mathcal{L}^{2}_{(0,q)}(M)$, extending it linearly so that it is zero on $H_{q}$. This amounts to precomposing $G_{q}$ with the orthogonal projection onto $(H_{q})^{\perp}$, and so does not affect compactness of the operator. Finally, \eqref{compest} also shows, for $1 \leq q \leq n-2$, that $G_{q}$ is compact (see for example \cite{Straube10}, Proposition 4.2, were these arguments are detailed in the context of the $\overline{\partial}$-Neumann operator.) Once the $\mathcal{L}^{2}$-theory is established, the various estimates can be lifted by standard methods to Sobolev spaces (again using \eqref{compest}) to obtain that the ranges in question are closed also in $W^{s}(M)$ for $s \geq 0$, and that $G_{q}$ is compact (in particular continuous) in $W^{s}_{(0,q)}(M)$, $s \geq 0$ (see for example \cite{Raich10}, subsection 5.3; \cite{KohnNirenberg65}, (proof of) Theorem 3). 

\section{Proof of Theorem \ref{main}}\label{proof}
We begin with some geometric preparations. Fix $P \in M$. Near $P$, $M$ is a graph over a smooth hypersurface in $\mathbb{C}^{m}$ (see \cite{BER, Boggess91}). After suitably rotating coordinates, we may assume that the graphing function is just the inverse of the projection $\pi: \mathbb{C}^{n} \rightarrow \mathbb{C}^{m}$, $(z_{1}, \cdots, z_{n}) \longmapsto (z_{1}, \cdots, z_{m})$, and $\pi(P) = 0$:
\begin{multline}\label{graph}
(z_{1}, \cdots, z_{m}) \longmapsto (z_{1}, \cdots, z_{m}, h_{1}(z_{1}, \cdots, z_{m}), \cdots, h_{n-m}(z_{1}, \cdots, z_{m}) \; , \\
(z_{1}, \cdots, z_{m}) \in \pi(M) \; ,\;\;\;\;\;\;\;
\end{multline}
where $h_{j}$, $1 \leq j \leq n-m$ is a CR-function on $\pi(M)$, and the map \eqref{graph} is a CR-diffeomorphism, namely the inverse of the restriction of $\pi$ to $M$. Because $M$ is pseudoconvex, $\pi(M)$ is a pseudoconvex hypersurface in $\mathbb{C}^{m}$. Denote by $\Omega$ a smooth pseudoconvex domain whose boundary contains a neighborhood of $0$ in $\pi(M)$, and whose other boundary points are strictly pseudoconvex. This can be done along the lines of the construction in section 4 of \cite{Bell86}, ignoring the parts that refer to finite type (compare also \cite{Amar84, GaySebbar85}). These constructions start with a domain, rather than just with a piece of hypersurface (such as $\pi(M)$), but everything is local, including a defining function such that a suitable power is strictly plurisubharmonic, on the side of $\pi(M)$ where it is negative. In principle, $\pi(M)$ could be pseudoconvex from both sides, that is, Levi-flat, in which case we would just choose one side as the pseudoconvex side. However,this does not happen in our situation. By assumption, $M$ satisfies property($P_{q}$), hence property($P_{m-1}$). Therefore, it cannot contain complex varieties of dimension $(m-1)$ (\cite{Straube10}, Lemma 4.20; the statement there is for the boundary of a domain, but the same proof works in the present situation). But then $\pi(M)$ cannot contain such varieties either, and \emph{a fortiori} it cannot be Levi flat.

Near $P$ choose an orthonormal basis $L_{1}, \cdots, L_{m-1}$ for $T^{1,0}(M)$, with associated dual basis $\omega_{1}, \cdots, \omega_{m-1}$, as in section \ref{CR}. Denote by $Z_{1}, \cdots, Z_{m-1}$ the projection of $L_{1}, \cdots, L_{m-1}$; this gives a (local) basis for $T^{1,0}(\pi(M))$. Let $\alpha_{1}, \cdots, \alpha_{m-1}$ be the dual basis. Note that while $Z_{j}=\pi_{*}(L_{j})$, $1 \leq j \leq m-1$, $\alpha_{j} \neq \pi_{*}(\omega_{j})$ in general (compare the discussion in \cite{Boggess91}, section 9.2). For a $(0,q)$-form $u=\sum^{'}_{|J|=q}u_{J}\,\overline{\omega}^{J}$ on $M$ that is supported sufficiently close to $P$, we define a $(0,q)$-from $\hat{\pi}u$ on $\pi(M)$ as follows:
\begin{equation}\label{push}
\hat{\pi}u(z) = \sideset{}{'}\sum_{|J|=q}u_{J}(\pi^{-1}(z))\;\overline{\alpha}^{J} \;, \;  z \in \pi(M) \; .
\end{equation}
Again, note that $\hat{\pi}u \neq \pi_{*}u$ in general; in fact,  $\pi_{*}$ need not take $\Lambda^{0,1}(M) \oplus \Lambda^{0,2}(M) \oplus \cdots \oplus \Lambda^{0,m-1}(M)$ to $\Lambda^{0,1}(\pi(M)) \oplus \cdots \oplus \Lambda^{0,m-1}(\pi(M))$ (see again \cite{Boggess91}, section 9.2, in particular Lemma 2). $\hat{\pi}$ commutes with wedge products. Most importantly for us, it also commutes with $\overline{\partial}_{b}$ and $\overline{\partial}_{b}^{*}$, modulo error terms of order zero. Indeed, by \eqref{d-bar-b}, we have
\begin{multline}\nonumber
\overline{\partial}_{b,\pi(M)}\left (\hat{\pi}u \right )(z) = \sideset{}{'}\sum_{j,J}\overline{Z_{j}}\left (u_{J}(\pi^{-1}(z)\right )\, \alpha_{j}\wedge \overline{\alpha}^{J} + \text{terms of order zero} \\
\stackrel{over}{\longrightarrow} \;\;\;\;\;
\end{multline}
\begin{multline}\label{commute}
\;\;\;\;\;\;\;\;\;\;\;\;\;\;\;= \sideset{}{'}\sum_{j,J}\left (\overline{L_{j}}u_{J} \right )(\pi^{-1}(z))\,\alpha_{j}\wedge \overline{\alpha}^{J} + \text{terms of order zero} \\
\;\;\;\;\;\;\;\;\;\;\;\;\;\;\;\;\;\;\;\;\;\;\;\;= \hat{\pi} \left (\sideset{}{'}\sum_{j,J}\overline{L_{j}}u_{J}\;\omega_{j}\wedge \overline{\omega}^{J} \right ) + \text{terms of order zero} \\
= \hat{\pi}\left (\overline{\partial}_{b,M}u \right )(z) + \text{terms of order zero} \; ,\;\;\;\;\;
\end{multline}
where the additional subscripts on $\overline{\partial}_{b}$ have their obvious meaning, and `terms of order zero' refers to the coefficients $u_{J}(\pi^{-1}(z))$. An analogous computation using \eqref{d-bar-b-*} gives
\begin{equation}\label{commute*}
\overline{\partial}^{*}_{b,\pi(M)}\left (\hat{\pi}u \right )(z) = \hat{\pi}\left (\overline{\partial}^{*}_{b,M}u \right )(z) + \text{terms of order zero} \; .
\end{equation}

To prove the compactness estimate \eqref{compest}, it suffices, via a partition of unity, to prove the estimate for forms that are supported in (small) neighborhoods where the above construction is valid. For such a form $u$, \eqref{push} and \eqref{commute*} show that \eqref{compest} holds if (and only if) a corresponding estimate holds for $\hat{\pi}u$ on $\pi(M)$. This will follow if there is a compactness estimate for $(0,q)$-forms on $b\Omega$ (since $\hat{\pi}u$ is supported near $0$, it can be viewed as a form on $b\Omega$). We are now in a position to give the (simple) proof when $q=1$.

So assume $q=1$. We claim that $b\Omega$ satisfies property($P_{1}$). First, compact subsets of $b\Omega \setminus \pi(M)$ satisfy property($P_{1}$) as compact subsets of a strictly pseudoconvex hypersurface (this is a special case of Proposition 1.12, \cite{Sibony87b}, Proposition 4.15, \cite{Straube10}; it is also implicit in \cite{Catlin84b}). Second, $\pi^{-1}$ can be extended as a $C^{\infty}$-map $F=(F_{1}, \cdots, F_{n})$ into a neighborhood of $\pi(M)$ in such a way that $\overline{\partial}F_{j}$, $1 \leq j \leq n$, vanishes to infinite order on $\pi(M)$ (see for example \cite{Boggess91}, Theorem 2, section 9.1). A standard computation shows that the pullbacks under $F$ of the functions $\lambda_{A}$ with large Hessians (from the definition of property($P_{1}$) for $M$) have large Hessians at points of $\pi(M)$ in directions in the complex tanget space. Adding $B\rho^{2}$, where $\rho$ is a defining function for $\pi(M)$ and $B$ is large enough, produces a function with large Hessian in all directions (compare the arguments in the proof of Proposition \ref{equivalent} in section \ref{potential} below). By continuity, there is a neighborhood of the compact set $b\Omega \cap \pi(M)$, depending on $A$, where the Hessian is at least $A/2$. Also, in a small enough neighborhood, again depending on $A$, the pullbacks will be bounded independently of $A$ by, say, $3/2$. In other words, $b\Omega \cap \pi(M)$ satisfies property($P_{1}$). Therefore, $b\Omega$ is a countable union of compact sets, all of which satisfy property($P_{1}$). Hence so does $b\Omega$ (\cite{Sibony87b}, Proposition 1.9, \cite{Straube10}, Corollary 4.14). Then $b\Omega$ also satisfies ($P_{m-1-q}$) (since $1 \leq m-1-q$), and Theorem 1.4 in \cite{RaichStraube08} gives a compactness estimate for $(0,1)$-forms on $b\Omega$. This completes the proof of Theorem \ref{main} when $q=1$.

When $q>1$, the argument involving the pullbacks of the functions $\lambda_{A}$ no longer works in such a straightforward way. This is essentially the statement that property($P_{q}$) is no longer obviously invariant under biholomorphisms when $q>1$ (the author does not know whether it actually is). The reason for this failure is simple: biholomorphic maps are in general not conformal, so that the push forward of a set of orthonormal vectors as in \eqref{Pq2a} is not orthonormal in general.\footnote{See Remarks 3 and 4 below for further discussion.} We remedy this situation by modifying the Euclidean metric near $\pi(M)$ (see \eqref{inner} below) so that the matrix of the Hessian of the pullback of $\lambda_{A}$, with respect to this modified metric, satisfies \eqref{Pq2a} (see \eqref{hessiancomp} below). We then derive analogues of the weighted estimates (19) in \cite{RaichStraube08} for this metric, both on $\Omega$ and on $\mathbb{B}\setminus \Omega$, where $\mathbb{B}$ is a big ball containing $\overline{\Omega}$ (see \eqref{estimate1} below). Once these estimates are established, the arguments in \cite{RaichStraube08} apply again to first give compactness of the respective $\overline{\partial}$-Neumann operators and then the desired compactness estimate for $(0,q)$-forms on $b\Omega$. 

We start with $\Omega$. It is convenient (although not necessary) to use that in our situation, more can be said about extending CR-functions from $\pi(M)$. Because $\pi(M)$ contains no analytic varieties of dimension $(m-1)$ (as observed at the end of the first paragraph of this section), CR-functions on $\pi(M)$ extend holomorphically to the pseudoconvex side of $\pi(M)$ (\cite{Trepreau86}). Thus locally, $M$ actually is the boundary, in the $C^{\infty}$-sense, of a complex manifold $\widetilde{M}$.\footnote{If $M$ is also orientable, this Kneser-Lewy type extension property combines with \cite{HarveyLawson75} to give that in fact $M$ is globally the boundary of a complex manifold (\cite{Kohn86}, p. 526).} $\widetilde{M}$ is graphed over a one sided neighborhood of $\pi(M)$, the graphing functions $H_{1}(z_{1}, \cdots,z_{m}), \cdots, H_{n-m}(z_{1}, \cdots, z_{m})$ are simply the (one sided) holomorphic extensions of $h_{1}, \cdots, h_{n-m}$ from \eqref{graph}. Extend $L_{1}, \cdots, L_{m-1}$ and augment the set by $L_{m}$ so that these vector fields form an orthonormal basis for $T^{1,0}(\widetilde{M})$ (locally), smooth up to the boundary $M$. Denote by $\omega_{1}, \cdots, \omega_{m}$ the dual basis. Via $\pi_{*}$, this gives fields $Z_{1}, \cdots, Z_{m}$ defined in a one sided neighborhood of $\pi(M)$ (and smooth up to the boundary $\pi(M)$); let $\alpha_{1}, \cdots, \alpha_{m}$ be the dual basis. Next, choose a cutoff function $\chi_{1}$ that equals one near $\pi(M) \cap b\Omega$, that is supported in a sufficiently small neighborhood of $\pi(M) \cap b\Omega$, and that takes values in $[0,1]$.  For a $(0,q)$-from $v$ on $\Omega$, denote by $v_{J}$ the coefficients with respect to $\alpha_{1}, \cdots, \alpha_{m}$; $v_{J}$ is defined on the support of $\chi_{1}$ (shrinking that support if necessary). Recall that $(v_{1},v_{2})_{z}$ denotes the Euclidean inner product between $(0,q)$-forms at the point $z \in \mathbb{C}^{m}$. For $z \in \overline{\Omega}$, we define the following new pointwise inner product on $(0,q)$-forms:
\begin{equation}\label{inner}
\{v_{1}, v_{2}\}_{z} = \chi_{1}(z)\sideset{}{'}\sum_{|J|=q}(v_{1})_{J}\overline{(v_{2})_{J}} + \left (1 - \chi_{1}(z)\right )(v_{1}, v_{2})_{z} \; .
\end{equation}
The norms induced by these inner products are equivalent to the (pointwise) Euclidean norms, uniformly in $z \in \overline{\Omega}$. We introduce weighted $\mathcal{L}^{2}$ inner products
\begin{equation}\label{inner2}
(v_{1}, v_{2})_{\varphi} = \int_{\Omega} \{v_{1}, v_{2}\}_{z}\, e^{-\varphi(z)}dV(z) 
\; ;
\end{equation}
where for the moment, $\varphi$ is only assumed smooth on $\overline{\Omega}$. The reader should keep in mind that these are not just weighted inner products as the term is typically used: the pointwise inner product \eqref{inner} involves a more substantial change of metric near $\pi(M)$ (namely, $Z_{1}, \cdots, Z_{m}$, hence $\alpha_{1}, \cdots,\alpha_{m}$, are declared orthonormal). In particular, the domain of the adjoint of $\overline{\partial}$ changes. For a discussion of these matters, see for example \cite{CelikStraube08}, p. 175--176.

For a $(0,q)$-from $u$ and a function $\sigma$ equal to one on $b\Omega \cap \pi(M)$ and supported where $\chi_{1}$ equals one, calculation of the Kohn-Morrey-H\"{o}rmander formula in local coordinates and dropping the nonnegative boundary term gives (see for example \cite{Shaw85}, formula (3.17), \cite{Ahn07}, Proposition 2.1 with $s=0$, \cite{Zampieri08}, Proposition 1.9.1 with $q_{0}=0$)
\begin{multline}\label{KMHloc}
\int_{\Omega}\sideset{}{'}\sum_{|K|=q-1}\sum_{j,k=1}^{m}(\varphi)_{jk}(\sigma u)_{jK}\overline{(\sigma u)_{kK}}\,e^{-\varphi} \leq 2 \left (\|\overline{\partial}(\sigma u)\|_{\varphi}^{2} + \|\overline{\partial}^{*}_{\varphi}(\sigma u)\|_{\varphi}^{2} \right ) + C\|\sigma u\|_{\varphi}^{2} \; ,\\
u \in C^{\infty}_{(0,q)}(\overline{\Omega}) \cap dom(\overline{\partial}_{\varphi}^{*}) \; ,
\end{multline}
where the matrix $(\varphi)_{jk}$, $1 \leq j,k \leq m$, is defined via $\partial\overline{\partial}\varphi = \sum_{j,k=1}^{m}(\varphi)_{jk}\alpha_{j}\wedge \overline{\alpha_{k}}$ on the support of $\sigma$, and $C$ is a constant that does not depend on $\varphi$. 

Next, we choose $\varphi(z)=\varphi_{A}(z)$. For $A>0$, let $\lambda_{A}$ be the function from the definition of property($P_{q}$) for $M$. Pick a cutoff function $\chi_{A}$, $0 \leq \chi_{A} \leq 1$, that equals one in a neighborhood of $b\Omega \cap \pi(M)$, is supported where $\chi_{1}$ equals one (so $\chi_{1}$ and $\chi_{A}$ are nested), and whose support is also small enough so that the composition with the graphing function for $\widetilde{M}$ is defined on this support. This last condition makes $\chi_{A}$ dependent on $A$, whence the subscript. Set
\begin{equation}\label{Pq-function}
\varphi_{A}(z) = \chi_{A}(z)\lambda_{A}(z_{1}, \cdots, z_{m}, H_{1}(z_{1}, \cdots, z_{m}), \cdots, H_{n-m}(z_{1}, \cdots, z_{m}))\; ;
\end{equation}
that is, $\varphi_{A}$ is the pullback, under the graphing function for $\widetilde{M}$, of $\lambda_{A}$, times the cutoff factor $\chi_{A}$ (and so is defined on all of $\overline{\Omega}$). We claim that for $z$ in the set where $\chi_{A}$ equals one, the matrix $(\varphi_{A})_{jk}(z)$ satisfies \eqref{Pq2a}. Let $\underline{t}^{1}, \cdots, \underline{t}^{q}$ be orthonormal in $\mathbb{C}^{m}$. Then, by the definition of $(\varphi_{A})_{jk}$ (note that $(\alpha_{j_{1}} \wedge \overline{\alpha_{k_{1}}})(Z_{j_{2}},\overline{Z_{k_{2}}}) 
= (1/2)\delta_{j_{1}j_{2}}\delta_{k_{1}k_{2}}$ ),
\begin{multline}\label{hessiancomp}
\sum_{s=1}^{q}\sum_{j,k=1}^{m}(\varphi_{A})_{jk}(z)(\underline{t}^{s})_{j}\overline{(\underline{t}^{s})_{k}} = 2\sum_{s=1}^{q}(\partial\overline{\partial}\varphi_{A})(z)\left (\sum_{j=1}^{m}(\underline{t}^{s})_{j}Z_{j}(z)\,, \,\sum_{k=1}^{m}\overline{(\underline{t}^{s})_{k}}\;\overline{Z_{k}(z)} \right ) \\
= 2\sum_{s=1}^{q}(\partial\overline{\partial}\lambda_{A})(\zeta)\left (\sum_{j=1}^{m}(\underline{t}^{s})_{j}L_{j}(\zeta)\,, \,\overline{\sum_{k=1}^{m}(\underline{t}^{s})_{k}\;L_{k}(\zeta)} \;\right ) \;\geq \; A \; ,
\end{multline}
where $\zeta = (z_{1}, \cdots, z_{m}, H_{1}(z), \cdots, H_{n-m}(z))$. In the last inequality in \eqref{hessiancomp}, we have used \eqref{Pq2a} and the fact that the vectors $\sum_{j=1}^{m}(\underline{t}^{s})_{j}L_{j}(\zeta)$, $1 \leq s \leq q$, are orthonormal in $\mathbb{C}^{n}$ (because $\underline{t}^{s}$, $1 \leq s \leq q$, are orthonormal in $\mathbb{C}^{m}$, and $L_{j}$, $1 \leq j \leq m$, are orthonormal in $\mathbb{C}^{n}$). This establishes the claim.

With $\varphi_{A}$ chosen, we establish the following estimate: 
\begin{equation}\label{estimate1}
\|u\|_{\varphi_{A}}^{2} \leq  \frac{C}{A} \left (\|\overline{\partial}u\|_{\varphi_{A}}^{2} + \|\overline{\partial}_{\varphi_{A}}^{*}u\|_{\varphi_{A}}^{2} \right ) + C_{A}\|u\|_{-1,\varphi_{A}} \; , \; u \in dom(\overline{\partial}_{\varphi_{A}}) \cap dom(\overline{\partial}_{\varphi_{A}}^{*}) \; ,
\end{equation}
for $A$ big enough. Here, the subscript $\varphi_{A}$ refers to the norms given by \eqref{inner2} (with $\varphi = \varphi_{A}$) and the corresponding adjoint of $\overline{\partial}$. Note that in view of the uniform bounds on $\varphi_{A}$ (and the uniform equivalence of the pointwise inner products \eqref{inner}), these norms are equivalent to the Euclidean norm on forms, uniformly in $A$. Restrict the cutoff function $\sigma$ from above to be supported on the set where $\chi_{A}$ equals one (and still let it equal one in a small neighborhood of $b\Omega \cap \pi(M)$), and denote it by $\sigma_{A}$. Then both the support of $(1-\sigma_{A})$ and the suport of $\nabla\sigma_{A}$ intersect the boundary of $\Omega$ in the set of strictly pseudoconvex boundary points. Near such points, there is a subelliptic $(1/2)$-estimate; this can be seen by checking that the usual proofs based on the boundary term in the Kohn-Morrey-H\"{o}rmander formula still apply when the formula is written in terms of local coordinates (\cite{FollandKohn72, ChenShaw01, Straube10} (alternatively, there is a general result by Sweeney \cite{Sweeney76} to the effect that subellipticity is independent of the metric; this is however considerably more sophisticted than what is needed here). Combining these estimates with interpolation of Sobolev norms and setting $E_{A}:= (\max|\nabla\sigma_{A}|+ 1)$ gives
\begin{multline}\label{eq1}
\|(1-\sigma_{A})u\|_{\varphi_{A}}^{2} \leq \frac{1}{AE_{A}}\|(1-\sigma_{A})u\|_{1/2,\varphi_{A}}^{2} + C_{A}\|(1-\sigma_{A})u\|_{-1,\varphi_{A}}^{2} \\
\leq \frac{C}{AE_{A}} \left (\|\overline{\partial}\left ((1-\sigma_{A})u \right )\|_{\varphi_{A}}^{2} + \|\overline{\partial}_{\varphi_{A}}^{*} \left ((1-\sigma_{A})u \right )\|_{\varphi_{A}}^{2} \right ) + C_{A}\|(1-\sigma_{A})u\|_{-1,\varphi_{A}}^{2}  \\
\leq \frac{C}{A} \left (\|\overline{\partial}u\|_{\varphi_{A}}^{2} + \|\overline{\partial}_{\varphi_{A}}^{*}u\|_{\varphi_{A}}^{2} + \|u\|_{\varphi_{A}}^{2} \right ) + C_{A}\|u\|_{-1,\varphi_{A}}^{2} \; .
\end{multline}
(We note that as usual, constants are allowed to change from one occurrence to the next.) On the other hand, for $\sigma_{A} u$, we obtain from \eqref{KMHloc}, \eqref{hessiancomp}, and the equivalence of \eqref{Pq2} and \eqref{Pq2a}, and for $u \in C^{\infty}_{(0,q)}(\overline{\Omega}) \cap \cap dom(\overline{\partial}_{\varphi_{A}}^{*})$:
\begin{multline}\label{eq2}
\|\sigma_{A} u\|_{\varphi_{A}}^{2} \leq \frac{2}{A} \left (\|\overline{\partial}(\sigma_{A} u)\|_{\varphi_{A}}^{2} + \|\overline{\partial}^{*}_{\varphi_{A}}(\sigma_{A} u)\|_{\varphi_{A}}^{2} \right ) + \frac{C}{A}\|\sigma_{A} u\|_{\varphi_{A}}^{2} \\
\leq \frac{2}{A} \left (\|\overline{\partial}u\|_{\varphi_{A}}^{2} + \|\overline{\partial}^{*}_{\varphi_{A}}u\|_{\varphi_{A}}^{2}  + \|(\nabla\sigma_{A})u\|_{\varphi_{A}}^{2} \right ) + \frac{C}{A}\|u\|_{\varphi_{A}}^{2} \\
\leq \frac{C}{A} \left (\|\overline{\partial}u\|_{\varphi_{A}}^{2} + \|\overline{\partial}_{\varphi_{A}}^{*}u\|_{\varphi_{A}}^{2} + \|u\|_{\varphi_{A}}^{2} \right ) + C_{A}\|u\|_{-1,\varphi_{A}}^{2}\; ,
\end{multline}
where $(\nabla\sigma_{A})u$ stands for terms of the form derivative of $\sigma_{A}$ times $u$. The last inequality in \eqref{eq2} follows by an interpolation of Sobolev norms for these terms similar to the one used in \eqref{eq1}.

Adding estimates \eqref{eq1} and \eqref{eq2} and absorbing the $\|u\|_{\varphi_{A}}^{2}$ terms gives \eqref{estimate1}, for $A$ big enough, and for $u \in C^{\infty}_{(0,q)}(\overline{\Omega}) \cap dom(\overline{\partial}_{\varphi_{A}}^{*})$. Finally, when $u$ is only in $ dom(\overline{\partial}_{\varphi_{A}}) \cap  dom(\overline{\partial}_{\varphi_{A}}^{*})$, we invoke the density in the graph norm of forms smooth up to the boundary. This density can be established in the same way as for the Euclidean norm: one can check that for example the proof in \cite{Straube10}, Proposition 2.3, also work for the metrics considered here. This establishes \eqref{estimate1}.

Having established \eqref{estimate1} on $\Omega$, we now turn to $\Omega^{+}:=\mathbb{B} \setminus \overline{\Omega}$. The argument here consists of an adaption of the proof of Proposition 3.1 in \cite{RaichStraube08} (to the effect that the standard $\overline{\partial}$-Neumann operator $N_{q}^{\Omega^{+}}$ is compact if $b\Omega$ satisfies property($P_{m-1-q}$)). The vector fields $Z_{j}$, $1 \leq j \leq m$, defined only in a one sided neighborhood of $\pi(M)$, can be extended into a full neighborhood of $b\Omega \cap \pi(M)$. The forms $\alpha_{1}, \cdots, \alpha_{m}$ then also extend as dual to $Z_{1}, \cdots, Z_{m}$. Now define a pointwise inner product and then an $\mathcal{L}^{2}$ inner product on $\Omega^{+}$ in analogy to \eqref{inner} and \eqref{inner2}. Define $\varphi_{A}$ as in \eqref{Pq-function}, but with $\lambda_{A}$ now the \emph{negative} of the function that comes from the definition of property($P_{m-1-q}$) of $M$. Then $\varphi_{A}$ is defined in a one sided neighborhood of $\pi(M)$, and $(-\varphi_{A})_{jk}$ satisfies \eqref{hessiancomp} with $q$ replaced by $(m-1-q)$. We can extend $\varphi_{A}$ into a full neighborhood of $b\Omega \cap \pi(M)$ (depending on $A$) so that \eqref{hessiancomp} (with $-\varphi_{A}$ in place of $\varphi_{A}$ and $q$ replaced by $(m-1-q)$) still holds there (possibly replacing $A$ by $A/2$, which is immaterial), and also keeping a uniform bound. We also choose $\sigma_{A}$ as above, equal to one in a neighborhood of $b\Omega \cap \pi(M)$ and supported near $b\Omega \cap \pi(M)$ as well.

For $u \in C^{\infty}_{(0,q)}(\overline{\Omega^{+}}) \cap dom(\overline{\partial}_{\varphi_{A}}^{*})$, we again split $u$ into $\sigma_{A}u + (1-\sigma_{A})u$. The estimation of $(1-\sigma_{A})u$ is the same as above, except that on the strictly pseudoconcave part of the boundary of $\Omega^{+}$, one has to invoke condition $Z(q)$ (\cite{FollandKohn72}, p.57) for the subelliptic $1/2$-estimate. Namely, for $z \in b\Omega \setminus (b\Omega \cap \pi(M))$, viewed as a boundary point of $\Omega^{+}$, the Levi form has $(m-1) \geq (q+1)$ negative eigenvalues. By \cite{Hormander65}, Theorem 3.25 or \cite{FollandKohn72}, Theorem 3.2.2, this implies to a subelliptic $1/2$-estimate. Actually, in both these references, the estimate is formulated not in terms of the $1/2$-norm, but rather in terms of the $\mathcal{L}^{2}$-norm on the boundary. But the $1/2$-estimate follows from this via elliptic theory: $\|u\|_{1/2}$ is dominated by the boundary $\mathcal{L}^{2}$-norm plus the $(-1)$-norm of $\Box_{\varphi_{A}}u$, which in turn is dominated by $\|\overline{\partial}u\| + \|\overline{\partial}_{\varphi_{A}}^{*}u\|$ (see for example \cite{Straube10}, (proof of) Proposition 3.1, compare also \cite{KohnNirenberg65}, Theorem 5).

To estimate $\sigma_{A}u$, we start by replacing \eqref{KMHloc} with the following estimate (which now also includes the boundary terms)
\begin{multline}\label{KMHloc2}
\int_{\Omega^{+}}\left (\sideset{}{'}\sum_{|K|=q-1}\sum_{j,k=1}^{m}(\varphi_{A})_{jk}(\sigma_{A}u)_{jK}\overline{(\sigma_{A}u)_{kK}} \right )e^{-\varphi_{A}} - \int_{\Omega^{+}}\sideset{}{'}\sum_{|J|=q}\left (\sum_{j \leq m-1}(\varphi_{A})_{jj} \right )|\sigma_{A}u_{J}|^{2}\;e^{-\varphi_{A}} \\
+ \int_{b\Omega^{+}}\left (\sideset{}{'}\sum_{|K|=q-1}\sum_{j,k=1}^{m}\rho_{jk}(\sigma_{A}u)_{jK}\overline{(\sigma_{A}u)_{kK}} \right ) e^{-\varphi_{A}} - \int_{b\Omega^{+}}\sideset{}{'}\sum_{|J|=q}\left (\sum_{j \leq m-1}\rho_{jj} \right )|\sigma_{A}u_{J}|^{2}\; e^{-\varphi_{A}} \\
\leq 2 \left (\|\overline{\partial}(\sigma_{A}u)\|_{\varphi_{A}}^{2} + \|\overline{\partial}_{\varphi_{A}}^{*}(\sigma_{A}u)\|_{\varphi_{A}}^{2} \right ) + C\|\sigma_{A}u\|_{\varphi_{A}}^{2} \; , \; u \in C^{\infty}_{(0,q)}(\overline{\Omega^{+}}) \cap dom(\overline{\partial}_{\varphi_{A}}^{*}) \; ,
\end{multline}
where $C$ does not depend on $A$, and $\rho$ is a normalized defining function for $b\Omega^{+}$ (say, near $b\Omega \cap \pi(M)$). \eqref{KMHloc2} is in \cite{Ahn07}, Proposition 2.1 (with $s=(m-1)$) and in \cite{Zampieri08}, Proposition 1.9.1 (with $q_{0}=m-1$); it is implicit already in \cite{Shaw85}, formula (3.23). Now the argument follows verbatim the one in \cite{RaichStraube08}, starting with inequality (15) there, and we outline it only briefly, referring the reader to \cite{RaichStraube08} for details. The first observation is that the boundary terms can be collected into
\begin{equation}\label{boundaryint}
\int_{b\Omega^{+}}\left (\sideset{}{'}\sum_{|K|=q-1}\sum_{j,k=1}^{m-1}\left (\rho_{jk}-\frac{1}{q}\left (\sum_{l=1}^{m-1}\rho_{ll}\right )\delta_{jk} \right ) (\sigma_{A}u)_{jK}\overline{(\sigma_{A}u)_{kK}} \right )e^{-\varphi_{A}} \; ,
\end{equation}
see (16) in \cite{RaichStraube08}. The sum over $|K|=(q-1)$ is also only over $K$ that do not contain $m$ (since then $u_{jK}=0$, because $u \in dom(\overline{\partial}_{\varphi_{A}}^{*})$). Therefore, the sum in the integrand equals at least $|\sigma_{A}u|^{2}$ times the sum of the smallest $q$ eigenvalues of the Hermitian matrix in the integrand. This sum equals minus the trace plus the sum of the smallest $q$ eigenvalues of $(\rho_{jk})_{jk=1}^{m-1}$, so equals minus the sum of $m-1-q$ eigenvalues of $(\rho_{jk})_{jk=1}^{m-1}$. Because the complex Hessian of $\rho$ is negative semidefinite on the complex tangent space, this sum is nonnegative. Thus the boundary terms give a nonnegative contribution to the left hand side of \eqref{KMHloc2} and can be dropped from the inequality. In the terms involving integration over the interior, restricting the sums over $K$ with $m \notin K$, $j,k < m$, and $J$ with $m \notin J$ makes an error that is controlled as follows. When $m \in J$, $u_{J}$ vanishes on the boundary (because $u \in dom(\overline{\partial}_{\varphi_{A}}^{*})$), and  $\|\overline{\partial}\overline{\partial}_{\varphi_{A}}^{*}(\sigma_{A}u) + \overline{\partial}_{\varphi_{A}}^{*}\overline{\partial}(\sigma_{A}u)\|_{-1}$ controls  $\|(\sigma_{A}u)_{J}\|_{1}$. This is well known for the Euclidean metric; for a proof see \cite{Straube10}, Lemma 2.12. For the case of our metric, the same argument works because the second order part of $\Box_{\varphi_{A}}$ still acts diagonally (in the chosen basis) via an elliptic operator. This is essentially the same computation as in the Euclidean case, but done in local coordinates. The constant in the resulting estimate depends on $A$; via interpolation of Sobolev norms, we obtain control of $\|(\sigma_{A}u)_{J}\|$ in terms of the right hand side of \eqref{KMHloc2}, modulo adding $\|\sigma_{A}u\|_{-1}$. The resulting estimate (corresponding to (18) in \cite{RaichStraube08}) is:
\begin{multline}\label{RS18}
\int_{\Omega^{+}}\left (\sideset{}{''}\sum_{|K|=q-1}\sum_{j,k=1}^{m-1}\left ((\varphi_{A})_{jk}-\frac{1}{q}\left (\sum_{l=1}^{m-1}(\varphi_{A})_{ll}\right )\delta_{jk} \right ) (\sigma_{A}u)_{jK}\overline{(\sigma_{A}u)_{kK}} \right ) e^{-\varphi_{A}} \\
\leq C \left (\|\overline{\partial}(\sigma_{A}u)\|_{\varphi_{A}}^{2} + \|\overline{\partial}_{\varphi_{A}}^{*}(\sigma_{A}u)\|_{\varphi_{A}}^{2}  + \|\sigma_{A}u\|_{\varphi_{A}}^{2} \right ) + C_{A}\|\sigma_{A}u\|_{-1, \varphi_{A}}^{2} \; ,
\end{multline}
where $\sum''$ denotes summation over strictly increasing multi-indices that do not contain $m$. Arguing as above, we obtain that the integrand in the left hand side of \eqref{RS18} is at least $|\sigma_{A}u|^{2}$ times minus the sum of $(m-1-q)$ eigenvalues of $((\varphi_{A})_{jk})_{j,k=1}^{m-1}$. This sum is at least equal to the sum of the smallest $(m-1-q)$ eigenvalues of $((-\varphi_{A})_{jk})_{j,k=1}^{m}$, which in turn is at least $A$ (this is why we chose the negative sign in the definition of $\lambda_{A}$). Choosing $A$ big enough and absorbing the $\|\sigma_{A}u\|_{\varphi_{A}}^{2}$ term gives the desired estimate
\begin{equation}\label{eq5}
\|\sigma_{A}u\|_{\varphi_{A}}^{2} \leq \frac{C}{A} \left (\|\overline{\partial}(\sigma_{A}u)\|_{\varphi_{A}}^{2} + \|\overline{\partial}_{\varphi_{A}}^{*}(\sigma_{A}u)\|_{\varphi_{A}}^{2}  \right ) + C_{A}\|\sigma_{A}u\|_{-1, \varphi_{A}}^{2} \; , 
\end{equation}
with $C$ independent of $A$. Again invoking the density of forms smooth up to the boundary in the graph norm (compare the remark in the paragraph following \eqref{eq2}), \eqref{eq5} carries over to $u \in dom(\overline{\partial}) \cap dom(\overline{\partial}_{\varphi_{A}}^{*})$.

Combining \eqref{eq5} with the estimate for $\|(1-\sigma_{A})u\|$ gives \eqref{estimate1} on $\Omega^{+}$ as well. 

From now on, the argument follows \cite{RaichStraube08} essentially verbatim. First, \eqref{estimate1} on $\Omega$ and $\Omega^{+}$ implies compactness of the $\overline{\partial}$-Neumann operators $N_{q}^{\Omega}$ and $N_{q}^{\Omega^{+}}$, respectively. This is shown in \cite{RaichStraube08} on pages 772--773. The fact that here the unweighted norm is not the Euclidean norm (and consequently the domain of the adjoint of $\overline{\partial}$ may change, see \cite{CelikStraube08}) does not matter; what does is that it is comparable to the Euclidean norm, so that the norms $\|\cdot\|_{\varphi_{A}}$ are also, and \emph{uniformly} so with respect to $A$. The argument for $\Omega$ is slightly simpler in that the space of harmonic forms is trivial. (This actually turns out to be the case also on $\Omega^{+}$ ($1 \leq q \leq (m-2)$), as shown recently in \cite{Shaw10}.) Finally, compactness of both $N_{q}^{\Omega}$ and $N_{q}^{\Omega^{+}}$ implies the desired compactness estimate on $(0,q)$-forms on $b\Omega$; this is shown in \cite{RaichStraube08}, section 4.

\vspace{0.15in}
\emph{Remark 3}: The fact that property($P_{q}$) is not (known to be) invariant under, say, local biholomorphisms smooth up to the boundary is unsatisfactory from the perspective of the property as a sufficient condition for compactness of $N_{q}$: the latter \emph{is} invariant (pullbacks of compact solution operators for $\overline{\partial}$ on the image domain produce compact solution operators on the source domain, see \cite{Straube10}, proof of the implication $(i) \Rightarrow (ii)$ in Theorem 4.26). This invariance may be viewed as a special case of invariance under a change of metric, by pulling back the metric from the image domain to the source domain, as in the proof of Theorem \ref{main} (see \cite{CelikStraube08} for the invariance of compactness under a change of metric, among metrics smooth up to the boundary). The part of the proof of Theorem \ref{main} where we worked on $\Omega$ shows in particular that it suffices for compactness of $N_{q}$ that there exist some metric (smooth up to the boundary) with respect to which property($P_{q}$) holds (in the sense of \eqref{hessiancomp}). Likewise, the work for $\Omega^{+}$ shows that such an invariant version of Proposition 3.1 in \cite{RaichStraube08} holds for $\Omega^{+}$. As a result, there is an invariant version of Theorem 1.4 in \cite{RaichStraube08}; this is the `slight extension' referred to in the abstract. 

\vspace{0.15in}
\emph{Remark 4}: In light of the comments in the previous remark, and the invariance result in \cite{CelikStraube08}, it seems appropriate, for the time being, to (re)formulate property($P_{q}$) invariantly when $q>1$. That is, there should exist a metric (smooth up to the boundary) so that $P_{q}$ holds with respect to this metric, in the sense of \eqref{hessiancomp}. 

\vspace{0.15in}
\emph{Remark 5}: A slightly different approach to the proof of Theorem \ref{main} is as follows. Instead of establishing compactness of $N_{q}^{\Omega}$ and $N_{q}^{\Omega^{+}}$, one establishes  compactness of $N_{q}^{\Omega}$ and $N_{m-1-q}^{\Omega}$ (from properties($P_{q}$) and ($P_{m-1-q}$) of $M$, respectively; so this is only one proof). Compactness at these two symmetric levels in the interior implies compactness on the boundary at the level of $(0,q)$-forms ($1 \leq q \leq (m-2)$). This implication is implicit in \cite{Kohn02} and explicit (within a general context) in \cite{Khanh10}. On the other hand, the argument for the annular domain $\Omega^{+}$ used above is of interest in its own right.

\vspace{0.15in}
\emph{Remark 6}: For emphasis, we point out the following observation which was used in the proof of Theorem \ref{main}. $\overline{\partial}_{b}$ (defined extrinsically) does not commute with pullbacks under CR-equivalences between CR-submanifolds of $\mathbb{C}^{n}$. Consequently, results about the complex associated with a particular CR-submanifold do not automatically transfer to the complex on a CR-equivalent submanifold. Nonetheless, solvability for $\overline{\partial}_{b}$ is preserved (\cite{Boggess91}, Corollary 2, section 9.2), and so is compactness: the compactness estimate \eqref{compest} is preserved under CR-equivalences. This is immediate from the properties \eqref{push} -- \eqref{commute*} of the modified pullback $\hat{\pi}$.

\section{Two potential theoretic conditions}\label{potential}
In \cite{Raich10}, property(CR-$P_{q}$) is introduced (for an orientable $M$), and it is shown that (CR-$P_{q}$) and (CR-$P_{m-1-q}$) are sufficient for compactness of $G_{q}$. It is also shown that ($P_{q}$) implies CR-($P_{q}$). We show here that the two are actually equivalent. In fact, both are equivalent to a version of ($P_{q}$) where the condition on the Hessian is required only in complex tangential directions that lie in the nullspace of the Levi form. In the context of ($\widetilde{P}_{q}$) for the boundary of a domain, this was observed in \cite{Celik08} (Proposition 1; compare also \cite{Sahutoglu09}, Corollary 2 and the remark that follows).

We recall the definition of property(CR-$P_{q}$) from \cite{Raich10}. $M$ is a smooth orientable CR-submanifold of $\mathbb{C}^{n}$ of hypersurface type. Because $M$ is orientable, the (real) vector field $T$ from section \ref{CR} can be chosen globally. Denote by $\eta$ the $1$-form on $M$ that annihilates $T^{1,0}(M) \oplus T^{0,1}(M)$ and that is constant one on $T$. Then the Levi form \eqref{levi} equals $\langle\eta, (1/2i)[L, \overline{L}]\rangle = (i/2)\langle d\eta, L\wedge \overline{L}\rangle$, where $L$ is a local section of $T^{1,0}(M)$; the equality follows from the definition of the exterior derivative and the fact that $\langle \eta, L\rangle \equiv \langle \eta, \overline{L}\rangle \equiv 0$. Replacing $T$ by $-T$ (hence $\eta$ by $-\eta$) if necessary, we may assume that the Levi form is positive definite. Let $\gamma := (i/2)\eta$. Then $M$ is said to satisfy property(CR-$P_{q}$) if for every $A>0$ there exist a function $\lambda_{A}$ defined in a neighborhood $U_{\lambda}$ of $M$ with $0 \leq \lambda_{A} \leq 1$ and a constant $A_{0}$ such that
\begin{equation}\label{CRPq}
\sum_{j=1}^{q}\left \langle \frac{1}{2} (\partial_{b}\overline{\partial}_{b}\lambda_{A} - \overline{\partial}_{b}\partial_{b}\lambda_{A} )+A_{0}d\gamma\;, \,L_{j} \wedge \overline{L_{j}} \right \rangle \; \geq \; A \; 
\end{equation}
at all points $z \in M$ and any set $\{L_{j}\}_{j=1}^{q}$ of orthonormal vectors in  $T^{1,0}_{z}(M)$ (\cite{Raich10}, Definition 2.6).

We say that $M$ satisfies property($P_{q}$) on the nullspace of the Levi form if for every $A>0$ there exists a (smooth) function $\lambda_{A}$ defined in a neighborhood $U_{A}$ of $M$ with $0 \leq \lambda_{A} \leq 1$ and 
\begin{equation}\label{Pqnull}
\sum_{s=1}^{q}\sum_{j,k=1}^{n}\frac{\partial^{2}\lambda_{A}(z)}{\partial z_{j}\partial\overline{z_{k}}}(L_{s})_{j}\overline{(L_{s})_{k}} \,\geq \, A \; ,
\end{equation}
whenever $L_{1}, \cdots, L_{q}$ are orthonormal and are contained in the nullspace $\mathcal{N}_{z}$ of the Levi form of $M$ at $z \in M$.
\begin{proposition}\label{equivalent}
Let $M \subset \mathbb{C}^{n}$ be a smooth compact pseudoconvex CR-submanifold of hypersurface type, let $1 \leq q \leq m-1$, where $(m-1)$ is the CR-dimension of $M$. Then the following conditions are equivalent:

\noindent (i) $M$ satisfies property($P_{q}$) on the nullspace of the Levi form;

\noindent (ii) $M$ satisfies property($P_{q}$).

\noindent For orientable $M$, (i) and (ii) are also equivalent to

\noindent (iii) $M$ satisfies property(CR-$P_{q}$).
\end{proposition}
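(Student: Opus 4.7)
The direction $(\mathrm{ii})\Rightarrow(\mathrm{i})$ is immediate, since orthonormal $q$-tuples in $\mathcal{N}_{z}\subset T^{1,0}_{z}(M)\subset\mathbb{C}^{n}$ are in particular orthonormal in $\mathbb{C}^{n}$, and \eqref{Pq2a} at $z\in M$ specialized to such tuples is exactly \eqref{Pqnull}. For orientable $M$, the equivalence of (ii) and (iii) rests on the identity
\[
\tfrac{1}{2}(\partial_{b}\overline{\partial}_{b}-\overline{\partial}_{b}\partial_{b})\lambda_{A}(L,\overline{L})=\partial\overline{\partial}\lambda_{A}(L,\overline{L})+c\,\mathrm{Lev}(L,\overline{L})\,T\lambda_{A}\quad(L\in T^{1,0}_{z}(M)),
\]
which follows from $[L,\overline{L}]\equiv 2i\,\mathrm{Lev}(L,\overline{L})\,T$ mod $T^{1,0}\oplus T^{0,1}$, combined with $\langle d\gamma,L\wedge\overline{L}\rangle=\mathrm{Lev}(L,\overline{L})$. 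For $(\mathrm{ii})\Rightarrow(\mathrm{iii})$: since $\mathrm{Lev}\geq 0$ and $T\lambda_{A}$ is bounded on the compact $M$, the correction $c\,\mathrm{Lev}\cdot T\lambda_{A}$ is dominated by $A_{0}\,d\gamma=A_{0}\,\mathrm{Lev}$ for $A_{0}$ sufficiently large. For $(\mathrm{iii})\Rightarrow(\mathrm{i})$: when $L\in\mathcal{N}_{z}$ both Levi contributions vanish and the CR-$P_{q}$ inequality reduces to \eqref{Pqnull}.

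\smallskip

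\noindent The main content is $(\mathrm{i})\Rightarrow(\mathrm{ii})$. Given $A'>0$, the plan is to construct $\tilde\lambda_{A'}$ satisfying (ii) by augmenting a suitable $\lambda_{A}$ from (i) (with $A$ chosen in terms of $A'$) by a correction whose complex Hessian supplies positivity in the directions not already controlled by the nullspace condition. Using that $M$ is locally CR-equivalent to a pseudoconvex hypersurface in $\mathbb{C}^{m}$, I cover $M$ by finitely many open sets $U_{\alpha}\subset\mathbb{C}^{n}$, on each of which there is a smooth real function $\tilde r_{\alpha}$ (the extension of a defining function of the local ambient hypersurface composed with the projection $\pi_{\alpha}:\mathbb{C}^{n}\to\mathbb{C}^{m}$) vanishing on $M\cap U_{\alpha}$, whose complex Hessian restricted to $T^{1,0}(M)$ at points of $M\cap U_{\alpha}$ equals the Levi form of $M$. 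Patching with a partition of unity $\{\chi_{\alpha}\}$, the global function $\Psi:=\sum_{\alpha}\chi_{\alpha}\tilde r_{\alpha}$ inherits this property on $M$: a direct Leibniz computation using $\tilde r_{\alpha}|_{M}=0$ and $\partial_{v}\tilde r_{\alpha}|_{M}=0$ for $v\in T^{1,0}(M)$ gives $\partial\overline{\partial}\Psi|_{T^{1,0}(M)}=\mathrm{Lev}_{z}$ at $z\in M$. To supply positivity in the complex-normal directions $V_{z}:=(T^{1,0}_{z}(M))^{\perp}$, I add a localized quadratic term and set
\[
\tilde\lambda_{A'}\ :=\ N^{-1}\Bigl(\lambda_{A}\,+\,\varepsilon\Psi\,+\,\varepsilon B\sum_{\alpha}\chi_{\alpha}^{2}\,|z-z_{\alpha}|^{2}\Bigr),
\]
with $z_{\alpha}\in M$ reference points and the parameters $\varepsilon,B,A,N$ together with the fineness of the cover chosen in terms of $A'$.

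\smallskip

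\noindent The main obstacle is the detailed bookkeeping. For an orthonormal $q$-tuple $\underline{t}^{1},\ldots,\underline{t}^{q}$ in $\mathbb{C}^{n}$, each $\underline{t}^{s}$ is decomposed along $\mathcal{N}_{z}\oplus(\mathcal{N}_{z}^{\perp}\cap T^{1,0}(M))\oplus V_{z}$; the Hessian sum receives a controlled contribution from $\lambda_{A}$ on the $\mathcal{N}_{z}$-part by (i), positivity from $\Psi$ on the $\mathcal{N}_{z}^{\perp}$-part by the Levi form, and isotropic positivity on all of $\mathbb{C}^{n}$ from the quadratic correction. The cross terms produced by the decomposition are absorbed via Cauchy--Schwarz at the cost of constants that can be dominated by taking $\varepsilon B$ large. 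The principal delicacy is ensuring that the normalization $N$ does not destroy the Hessian lower bound: this requires shrinking the reference neighborhood so that each patch has diameter $D\lesssim 1/\sqrt{A'}$ (permissible since $M$ is compact and the cover is finite), whereupon the quadratic term has sup norm $\lesssim D^{2}$ yet normalized Hessian $\gtrsim 1/D^{2}\geq A'$. The partition-of-unity gradient terms are tamed by using the squared partition $\chi_{\alpha}^{2}$, whose gradient vanishes on the zero set of $\chi_{\alpha}$, so they contribute at a scale controlled by $\varepsilon B$ rather than by it multiplied by a large partition-derivative factor.
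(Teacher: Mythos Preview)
Your treatment of the easy directions matches the paper: $(\mathrm{ii})\Rightarrow(\mathrm{i})$ is trivial, and for orientable $M$ the implications $(\mathrm{ii})\Rightarrow(\mathrm{iii})$ and $(\mathrm{iii})\Rightarrow(\mathrm{i})$ both follow from the identity in \cite{Raich10}, Proposition~3.2 (your formula with the $c\,\mathrm{Lev}\cdot T\lambda_{A}$ term), exactly as you indicate. The paper additionally proves $(\mathrm{i})\Rightarrow(\mathrm{iii})$ directly via a compactness argument on the bundle of orthonormal $q$-frames in $T^{1,0}(M)$, but your chain $(\mathrm{iii})\Rightarrow(\mathrm{i})\Rightarrow(\mathrm{ii})\Rightarrow(\mathrm{iii})$ is logically equivalent.

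The genuine gap is in your construction for $(\mathrm{i})\Rightarrow(\mathrm{ii})$. The ``isotropic positivity'' you attribute to $\sum_{\alpha}\chi_{\alpha}^{2}|z-z_{\alpha}|^{2}$ is false: this function is not plurisubharmonic in general. Already in one real variable, with $\chi_{1}+\chi_{2}\equiv 1$ and centers $0,1$, the function $\chi_{1}^{2}x^{2}+\chi_{2}^{2}(x-1)^{2}$ vanishes at $0$ and $1$ and is positive at $1/2$, so its second derivative is negative somewhere. In your Hessian computation the diagonal contribution $\sum_{\alpha}\chi_{\alpha}^{2}\,I$ and the partition-derivative errors (coming from $\partial\chi_{\alpha}\cdot(z-z_{\alpha})$ and $\partial\overline{\partial}(\chi_{\alpha}^{2})\cdot|z-z_{\alpha}|^{2}$) are both $O(1)$, \emph{independently of the diameter $D$}; refining the cover does not improve the ratio, so the scaling argument in your last paragraph cannot close. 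Squaring the partition only ensures the errors scale with $\varepsilon B$ rather than $\varepsilon B/D$, but that is the same scale as the gain.

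The paper sidesteps this by choosing corrections that \emph{vanish on $M$}, so no normalization $N$ (and hence no $A'$-dependent refinement of the cover) is needed. Working locally near a point with defining functions $\rho_{1},\ldots,\rho_{d}$ for $M$ having orthonormal gradients, the Levi form on $T^{1,0}(M)$ equals the complex Hessian of $\sum_{l}\rho_{l}\langle\eta,J\nabla\rho_{l}\rangle$ (Boggess, \cite{Boggess91}, \S10.2); adding a large multiple $B_{1}$ of this to $\lambda_{A}$ and arguing by compactness on the frame bundle (exactly your Cauchy--Schwarz step) yields Hessian $\ge A-1$ on all of $T^{1,0}(M)$. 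For the complex-normal directions one then adds $B_{2}\sum_{l}\rho_{l}^{2}$, whose Hessian on $M$ is $2\sum_{l}\partial\rho_{l}\otimes\overline{\partial}\rho_{l}\ge 0$, positive definite on $(T^{1,0}_{z}(M))^{\perp}$ and zero on $T^{1,0}_{z}(M)$. Both corrections vanish on $M$, so the uniform bound $0\le\lambda\le 1$ survives (up to a fixed constant) after shrinking $U_{A}$. Finally, the passage from local to global $P_{q}$ is the standard fact from \cite{Sibony87b,Straube10}. Replacing your quadratic term by $B_{2}\sum_{l}\rho_{l}^{2}$ (and your $\Psi$ by the simpler $\sum_{l}\rho_{l}\langle\eta,J\nabla\rho_{l}\rangle$, though your $\Psi$ would also work) repairs the argument.
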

The implication $(ii) \Rightarrow (iii)$ is in Corollary 1.2 in \cite{Raich10}. Note that orientability is required for the definition of (CR-$P_{q}$) in $(iii)$.
\begin{proof}
That (ii) implies (i) is a trivial consequence of the definitions. The implication $(iii) \Rightarrow (i)$ is an immediate consequence of \cite{Raich10}, Proposition 3.2. By that proposition, with $\{L_{s}\}$ as above (so that $\langle d\gamma, L_{s}\wedge \overline{L_{s}}\rangle = 0$), $1 \leq s \leq q$,
\begin{multline}\label{raichprop}
\frac{1}{2}\sum_{s=1}^{q}\sum_{j,k=1}^{n}\frac{\partial^{2}\lambda_{A}(z)}{\partial z_{j}\partial\overline{z_{k}}}(L_{s})_{j}\overline{(L_{s})_{k}}\; = \;\sum_{s=1}^{q}\left \langle\partial\overline{\partial}\lambda_{A}, L_{s}\wedge \overline{L_{s}} \right \rangle \\
= \sum_{s=1}^{q}\left \langle \frac{1}{2} (\partial_{b}\overline{\partial}_{b}\lambda_{A} - \overline{\partial}_{b}\partial_{b}\lambda_{A} )\,, \,L_{s} \wedge \overline{L_{s}} \right \rangle 
+ \frac{1}{2}\nu(\lambda_{A})\sum_{s=1}^{q} \langle d\gamma, L_{s}\wedge \overline{L_{s}}\rangle \\
= \sum_{s=1}^{q}\left \langle \frac{1}{2} (\partial_{b}\overline{\partial}_{b}\lambda_{A} - \overline{\partial}_{b}\partial_{b}\lambda_{A} )+A_{0}d\gamma\;, \,L_{s} \wedge \overline{L_{s}} \right \rangle \; \geq \; A \; ,
\end{multline}
where $\nu(\lambda_{A})$ is the derivative of $\lambda_{A}$ in the direction of $\nu$, the `normal' to $M$ given by $J$ (the complex structure map of $\mathbb{C}^{n}$) times the unit tangent field to $M$ that is perpendicular to $T^{\mathbb{C}}(M)$. The second equality is Proposition 3.2 in \cite{Raich10}, and the last inequality holds if $\lambda_{A}$ is the function from the definition of property(CR-$P_{q}$).

We next show that (i) implies (iii). The argument is standard. Denote by $\mathbb{B}^{n}$ the unit ball in $\mathbb{C}^{n}$, by $S$ the compact subset of $M \times b\mathbb{B}^{n} \times \cdots \times b\mathbb{B}^{n}$ ($q$-fold) given by $\{(z, L_{1}, \cdots, L_{q})\, |\, L_{s} \in T^{1,0}_{z}(M), 1 \leq s \leq q\,,\, L_{1}, \cdots, L_{q} \;\text{are orthonormal}\}$, and by $K$ the compact subset of $S$ given by the requirement that $L_{s} \in \mathcal{N}_{z}$, $1 \leq s \leq q$. Let $A>0$ and let $\lambda_{A}$ be the function from (i). Then on $K$, again because $\langle d\gamma,L_{s}\wedge \overline{L_{s}}\rangle = 0$, $1 \leq s \leq q$, 
\begin{equation}\label{eq10}
\sum_{j=1}^{q}\left \langle \frac{1}{2} (\partial_{b}\overline{\partial}_{b}\lambda_{A} - \overline{\partial}_{b}\partial_{b}\lambda_{A} ), \,L_{j} \wedge \overline{L_{j}} \right \rangle \; 
= \sum_{s=1}^{q}\left \langle\partial\overline{\partial}\lambda_{A}\,,\,L_{s}\wedge \overline{L_{s}} \right \rangle \geq \; \frac{A}{2} \; 
\end{equation}
Choose a neighborhood $U$ of $K$ (in $S$) so that \eqref{eq10} holds with $(A/2 - 1)$ on the right hand side for $(z, L_{1}, \cdots, L_{q}) \in U$; this can be done by continuity. Next, choose a neighborhood $V \subset\subset U$ of $K$, and denote by $B_{1}$ and $B_{2}$ the minimum of the left hand side of \eqref{eq10} and the strictly positive minimum of $\sum_{s=1}^{q}\langle d\gamma, L_{s}\wedge \overline{L_{s}}\rangle$, respectively, for $(z, L_{1}, \cdots, L_{q}) \in S \setminus V$.
Then for $A_{0}>0$, we have on $S\setminus V$
\begin{equation}\label{eq11}
\sum_{j=1}^{q}\left \langle \frac{1}{2} (\partial_{b}\overline{\partial}_{b}\lambda_{A} - \overline{\partial}_{b}\partial_{b}\lambda_{A} )+A_{0}d\gamma\;, \,L_{j} \wedge \overline{L_{j}} \right \rangle \geq B_{1} + A_{0}B_{2} \geq A
\end{equation}
if $A_{0}$ is chosen sufficiently large. When $(z, L_{1}, \cdots, L_{q}) \in  U$, the left hand side of \eqref{eq11} equals at least $\sum_{s=1}^{q}\left \langle \frac{1}{2} (\partial_{b}\overline{\partial}_{b}\lambda_{A} - \overline{\partial}_{b}\partial_{b}\lambda_{A}), \,L_{s} \wedge \overline{L_{s}} \right \rangle$ (by pseudoconvexity, $\langle d\gamma,L_{s}\wedge \overline{L_{s}}\rangle \geq 0$), hence at least $(A/2 - 1)$. Since $A>0$ is arbitrary and $U \cup (S\setminus V) = S$, this and \eqref{eq11} show that $M$ satisfies property(CR-$P_{q}$).

It remains to show $(i) \Rightarrow (ii)$. The strategy is the same as in the previous argument, except that the correction function that makes things big away from $K$ has to be in the form of a Hessian. So we need to express the Levi form in terms of Hessians. It suffices to prove property($P_{q}$) locally (\cite{Sibony87b}, \cite{Straube10}).  Recall that here $M$ is not assumed orientable, and the Levi form is is only defined locally, and only up to sign. Let $\rho_{1}, \cdots, \rho_{d}$, $d=2n-2m+1$, be a set of defining functions for $M$, say near a point $P$, so that their gradients are orthonormal. Choose the field $T$ to be orthogonal to $T^{1,0}(M) \oplus T^{0,1}(M)$.
By \cite{Boggess91}, Theorem 1 in section 10.2, the Levi form is given, up to sign, by the expression
\begin{equation}\label{levihessian}
\sum_{l=1}^{d}\left (\sum_{j,k=1}^{n}\frac{\partial^{2}\rho_{l}}{\partial z_{j}\partial\overline{z_{k}}}w_{j}\overline{w_{k}} \right ) \left \langle \eta, J\nabla\rho_{l}\right\rangle = \sum_{l=1}^{d}\left (\sum_{j,k=1}^{n}\frac{\partial^{2}(\rho_{l}\langle\eta, J\nabla\rho_{l}\rangle)}{\partial z_{j}\partial\overline{z_{k}}}w_{j}\overline{w_{k}} \right ) \; ,
\end{equation}
where $L = \sum_{j=1}^{n}w_{j}(\partial/\partial z_{j}) \in T^{1,0}(M)$. Here we think of $\eta$ as being defined also on vectors not in the tangent space to $M$ by setting it equal to zero on $T(M)^{\perp}$ (so that the right hand side of \eqref{levihessian} is well defined). Moreover, this definition works (locally) near $M$ as usual by considering the submanifolds given by $\rho_{l}=\varepsilon_{l}$, $1 \leq l \leq d$, for  $\varepsilon_{l}$ small. The equality in \eqref{levihessian} then holds for $z \in M$ because $\rho_{l}(z)=0$, $1 \leq l \leq d$, and $\sum_{j=1}^{n}(\partial\rho_{l}/\partial z_{j})(z)w_{j} = 0$. We may assume that \eqref{levihessian} is nonnegative (again changing $T$ to $-T$ if necessary).

Let now $A>0$, and $\lambda_{A}$ the function form the definition of property($P_{q}$) on the null space of the Levi form. Then the same argument as above shows that if $B_{1}$ is chosen sufficiently large, the function $\lambda_{A}+B_{1}\sum_{l=1}^{d}\rho_{l}\langle\eta,J\nabla\rho_{l}\rangle$ has Hessian greater than, say, $(A-1)$ on vectors in $T^{1,0}(M)$, near $P$. Because the correction term vanishes at points of $M$, we can keep uniform (in $A$) bounds, say $-1/2$ and $3/2$, by shrinking the neighborhood $U_{A}$ ; the actual value of the bounds is immaterial. Finally, adding $B_{2}\sum_{l=1}^{d}\rho_{l}^{2}$ and choosing $B_{2}$ large enough produces a function whose Hessian is large on all vectors in $\mathbb{C}^{n}$ (compare for example \cite{Sibony87b}, proof of Lemme 1.13, \cite{Catlin84b}, proof of Theorem 2, or \cite{Straube10}, proof of Proposition 4.15). This proves that $M$ satisfies property($P_{q}$) locally, and as was noted above, this implies that $M$ satisfies $P_{q}$ globally.
\end{proof}
\vspace{0.1in}
\emph{Remark 7}: Proposition \ref{equivalent} is a manifestation of a general, albeit somewhat vague, principle to the effect that conditions for some regularity property of $\overline{\partial}$ or $\overline{\partial}_{b}$ involving the complex tangent space need only be imposed on the nullspace of the Levi form. In the context of compactness, compare again \cite{Celik08, Sahutoglu09}. In connection with sufficient conditions for global regularity, manifestations of this principle occur for example in \cite{Straube10}, Theorem 5.9 and Proposition 5.13.

\bigskip
\bigskip
\emph{Acknowledgement:} The author is indebted to Andrew Raich for useful comments on a preliminary version of this manuscript.

\bigskip
\providecommand{\bysame}{\leavevmode\hbox to3em{\hrulefill}\thinspace}

\end{document}